\newtheorem{theorem}{Theorem}
\newtheorem{proposition}{Proposition}
\DeclareMathOperator{\Lip}{Lip}
\DeclareMathOperator{\card}{card}
\DeclareMathOperator{\ad}{ad}
\newcommand{\const}{\mathrm{const}}
\newcommand{\id}{\mathrm{id}}
\newcommand{\R}{\mathbb{R}}
\newcommand{\A}{\mathcal{A}}
\newcommand{\B}{\mathcal{B}}
\begin{document}

\title{Attainable set for rank $3$ step $2$ free Carnot group\\ with positive controls
\footnote{The work is supported by the Russian Science Foundation under grant 22-21-00877 (https://rscf.ru/en/project/22-21-00877/) and performed in Ailamazyan Program Systems Institute of Russian Academy of Sciences.}
}

\author{A.\,V.~Podobryaev\\
A.\,K.~Ailamazyan Program Systems Institute of RAS\\
\texttt{alex@alex.botik.ru}}

\date{}

\maketitle

\begin{abstract}
We find the attainable set for a control system on the free Carnot group of rank $3$ and step $2$ with positive controls.
This kind of control systems is connected with the theory of free Lie semigroups;
with some estimates for probabilities of inequalities for independent random variables;
with the nilpotent approximation of robotic control systems and
with contour recovering without cusps in image processing.
We investigate the boundary of the attainable set with the help of the Pontryagin maximum principle for the time-optimal control problem.
We study extremal trajectories that correspond to bang-bang, singular and mixed controls.
We obtain upper bounds for the number of switchings for optimal controls.
This implies a parametrization of the boundary faces of the attainable set.

\textbf{Keywords}: attainable set, Carnot group, time-optimal control problem, bang-bang control, nilpotent Lie semigroup

\textbf{AMS subject classification}:
93B03, %Attainable sets, reachability%
49K15, %Optimality conditions for problems involving ordinary differential equations%
35R03, %PDEs on Heisenberg group, Lie groups, Carnot groups, etc.%
22E25, %Nilpotent and solvable Lie groups%
20M05. %Free semigroups, generators and relations, word problems%
\end{abstract}

\section{\label{sec-intro}Introduction}
Consider the vector space $\R^r \times \R^{\frac{r(r-1)}{2}}$ of pairs of a vector $x$ and a skew-symmetric matrix $y$.
This vector space equipped with the multiplication rule
\begin{equation}
\label{eq-multiplication}
(x, y) \cdot (x', y') = (x + x', y + y' + (x x'^T - x' x^T))
\end{equation}
is called \emph{a free Carnot group $G$ of rank $r$ and step $2$}.
The following control system
\begin{equation}
\label{eq-controlsystem}
\begin{array}{ll}
\dot{x}_i = u_i, & i=1,\dots,r, \\
\dot{y}_{ij} = x_iu_j - x_ju_i, & i < j \\
\end{array}
\end{equation}
is invariant under the left action of the group $G$.
Control systems on Carnot groups are some kind of cornerstones in geometric control theory~\cite{agrachev-sachkov}
due to existence of a nilpotent approximation for general control systems~\cite{agrachev-sarychev}.
In particular, such control systems appear in several robotic systems~\cite{ardentov-mashtakov} and
in some models for contour reconstruction without cusps in image processing~\cite{mashtakov}.
We consider system~\eqref{eq-controlsystem} with controls $u_1,\dots,u_r \geqslant 0$.
Our goal is a description of the attainable set $\A$ from the point $(0,0)$ for this control system.

For any set of positive numbers $(c_1,\dots,c_r)$ consider \emph{a dilation} that is a map $D_{c_1,\dots,c_r} : G \rightarrow G$ such that
\begin{equation}
\label{eq-dilation}
D_{c_1,\dots,c_r}(x_i, y_{ij}) = (c_ix_i, c_ic_jy_{ij}), \qquad i=1,\dots,r, \qquad i<j.
\end{equation}
Note that dilations preserve system~\eqref{eq-controlsystem}.

Denote by $\A_1 = \{(x, y) \in \A \, | \, x = (1,\dots,1)\}$ the section of the attainable set.
Since $\A$ coincides with the closure of the orbit of $\A_1$ with respect to dilations~\eqref{eq-dilation},
then it is sufficient to describe the section $\A_1$.

H.~Abels and \`{E}.\,B.~Vinberg~\cite{abels-vinberg} suggested a probability interpretation of the section $\A_1$.
It turns out that the coordinates of points of this set are $y_{ij} = 2p_{ij} - 1$ for $i < j$, where
$p_{ij} = P(\xi_i < \xi_j)$ for independent random variables $\xi_1, \dots, \xi_r$ such that
$P(\xi_1 = \dots = \xi_r) = 0$.
So, the set $\B = {{1}\over{2}}((1,\dots,1)^T+\A_1)$ has a probability interpretation.

For $e^{t_1X_{i_1}}e^{t_2X_{i_2}}\dots e^{t_nX_{i_n}} \in \A_1$
%we have $\sum\limits_{l \, | \, i_l = i}{t_i} = 1$ and
from~\eqref{eq-multiplication} we obtain
\begin{equation}
\label{eq-b-coord}
\sum\limits_{l \, | \, i_l = i}{t_i} = 1, \qquad p_{ij} = \sum\limits_{l < m \, | \, i_l = i, i_m = j}{t_lt_m}.
\end{equation}

In paper~\cite{abels-vinberg} the attainable set $\A$ was regarded as \emph{a two-step nilpotent Lie semigroup of rank $r$}
and the set $\B$ (that is equivalent to the section $\A_1$) was described for $r=3$ with the help of an algebraic method.
The result implies some not obvious bounds for probabilities $P(\xi_i < \xi_j)$.
But it seems that the method is unapplicable in more complex cases $r > 3$.

In this paper we propose a constructive and algorithmic approach for $r=3$.
The conjecture is that this method will be useful also for $r > 3$.

The main idea is to consider the time-optimal problem on the group $G$ and
then to describe extremal trajectories that come to the boundary of the attainable set.
They are optimal trajectories for a reduced problem.
The Pontryagin maximum principle implies necessary conditions of optimality.
This allows us to find the extremal trajectories.
Then second order optimality conditions distinguish optimal trajectories.
More precisely, we get the number of control switchings for different kinds of optimal trajectories (bang-bang, singular and mixed).
This implies a parametrization of the boundary faces of the section $\A_1$ of the attainable set.
Note that endpoints of bang-bang trajectories with periodical control sweep non-trivial faces of the boundary of the attainable set.
We give the answer in terms of the set $\B$ and the coordinates $p = p_{12}$, $q = p_{23}$, $r = p_{31}$ on this set, see~\eqref{eq-b-coord}.

The paper has the following structure. In Section~\ref{sec-time-optimal} we introduce a time-optimal problem and necessary conditions for extremal trajectories coming to the boundary of the attainable set. Then we describe types of extremal trajectories in Section~\ref{sec-extremaltrajectories}. We find an upper bound for the number of control switchings for bang-bang trajectories in Section~\ref{sec-bang-bang}. We study singular and mixed trajectories in Sections~\ref{sec-singular} and \ref{sec-mixed}, respectively.
Finally, we summarize obtained results in the main theorem in Section~\ref{sec-conclusion}.

The author would like to thank prof.~A.\,M.~Tsirlin for useful discussions
and E.\,A.~Po\-do\-brya\-eva for the figures.

\section{\label{sec-time-optimal}Time-optimal problem}

In this section we formulate a time-optimal control problem, apply the Pontryagin maximum principle and derive the adjoint subsystem of ODEs that describes extremal trajectories. Here we consider the general case of arbitrary $r \geqslant 2$.

Consider a time-optimal problem
\begin{equation}
\label{eq-time-optimal}
\begin{array}{ll}
(x, y) \in \Lip{([0,T], G)}, & \\
x(0) = 0, & x(T) = x^1, \\
y(0) = 0, & y(T) = y^1, \\
\dot{x}_i = u_i, & i=1,\dots,r, \\
\dot{y}_{ij} = x_iu_j - x_ju_i, & i < j, \\
(u_1,\dots,u_r) \in L^{\infty}([0,T], U), & T \rightarrow \mathrm{min},
\end{array}
\end{equation}
where the set of controls is a simplex
\begin{equation*}
U = \{(u_1,\dots,u_r) \, | \, u_1 + \dots + u_r = 1, \, u_1, \dots, u_r \geqslant 0 \}.
\end{equation*}

\begin{proposition}
\label{prop-optimaltillinfty}
Every admissible trajectory of control system~$\eqref{eq-time-optimal}$ is optimal.
\end{proposition}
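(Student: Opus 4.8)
The plan is to exploit the equality constraint $u_1 + \dots + u_r = 1$ that defines the simplex $U$, which pins down the terminal time independently of the particular control. First I would look at the evolution of the sum of the first-level coordinates. Along any admissible trajectory we have $\dot{x}_1 + \dots + \dot{x}_r = u_1 + \dots + u_r = 1$ for almost every $t \in [0,T]$, so integrating from $0$ to $t$ and using $x(0) = 0$ yields the identity $x_1(t) + \dots + x_r(t) = t$.

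Next I would evaluate this identity at $t = T$ and invoke the boundary condition $x(T) = x^1$. This gives $T = x_1^1 + \dots + x_r^1$, so the terminal time is completely determined by the prescribed endpoint $x^1$ and does not depend on the chosen admissible control. Hence every admissible trajectory steering $(0,0)$ to $(x^1, y^1)$ takes exactly the same time $T = x_1^1 + \dots + x_r^1$. Since no admissible trajectory can be faster than another, each of them is automatically time-optimal, which is the assertion.

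The argument is immediate once one notices that $U$ is cut out by an equality rather than an inequality; the only point deserving care is to remember that admissibility includes satisfying the boundary conditions, so that the evaluation at $t = T$ is legitimate, and that $\sum_i u_i = 1$ need only hold almost everywhere. There is thus no genuine obstacle — the role of the proposition is conceptual. It says that in problem~\eqref{eq-time-optimal} the time cost is constant over the whole set of admissible controls, so that \emph{optimality is equivalent to admissibility}. This is exactly what makes it meaningful to apply the Pontryagin maximum principle later: the search for the boundary of the attainable set becomes the search, among admissible trajectories of the fixed common duration $T$, for those whose endpoints lie on the boundary $\partial\A$.
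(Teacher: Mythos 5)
Your proof is correct and rests on exactly the same observation as the paper's: since $\sum_i \dot{x}_i = \sum_i u_i = 1$, one has $\sum_i x_i(t) = t$, so the time elapsed is read off from the endpoint (the paper phrases this as the distance from the trajectory to the plane $x_1 + \dots + x_r = 1$ being proportional to time, citing~\cite{ardentov-ledonne-sachkov1}). Your version is simply a more explicit, self-contained write-up of that same argument.
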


\begin{proof}
Using an argument from~\cite{ardentov-ledonne-sachkov1}, we obtain that the distance from a point of an admissible trajectory to the plane $x_1 + \dots + x_r = 1$ is proportional to the time.
\end{proof}

\begin{proposition}
\label{prop-equivalent}
An optimal trajectory of problem~$\eqref{eq-time-optimal}$ comes to the boundary of the attainable set iff
it is time-optimal for the problem~$\eqref{eq-time-optimal}$ factorized by the subspace $\R(1,\dots,1)^T$
(so called reduced problem).
\end{proposition}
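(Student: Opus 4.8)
The plan is to exploit Proposition~\ref{prop-optimaltillinfty}: since the elapsed time along any admissible trajectory equals $T=x_1+\dots+x_r$, reaching a prescribed endpoint $g=(x,y)$ always costs exactly $T(g)=\sum_i x_i$. The subspace $\R(1,\dots,1)^T$ that we factor out is the one-parameter subgroup $H=\{(s\mathbf 1,0)\mid s\in\R\}$ generated by $X_0=X_1+\dots+X_r$, where $\mathbf 1=(1,\dots,1)$. A direct computation from~\eqref{eq-multiplication} shows that right translation $g\mapsto g\cdot(s\mathbf 1,0)$ moves $g$ within its coset $gH$ and changes the height as $T(g\cdot(s\mathbf 1,0))=T(g)+rs$; thus $X_0$ is transverse to the level sets $\{T=\const\}$ and $T$ is a strictly monotone coordinate along each coset. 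Consequently the reduced problem---reach the coset $g^1H$ in least time---has optimal value $\min\{T(g')\mid g'\in g^1H\cap\A\}$, and a trajectory arriving at $g$ is time-optimal for the reduced problem if and only if $g$ realizes this minimum within its own coset.

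Next I would establish that $\A$ is forward-invariant along $X_0$: if $g\in\A$ is reached in time $T(g)$, then concatenating with the admissible motion of constant control $u=\tfrac1r(1,\dots,1)$, which traces the flow $g\cdot(s\mathbf 1,0)$, shows $g\cdot(s\mathbf 1,0)\in\A$ for every $s\geqslant0$, in time $T(g)+rs$. Hence each coset meets $\A$ in a closed forward ray $\{s\geqslant s_0\}$, and the least-$T$ reachable point of the coset is precisely its entry point $s=s_0$. Combined with the previous paragraph, the trajectory to $g$ is time-optimal for the reduced problem exactly when $g$ is the entry point of its coset ray.

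It remains to identify entry points with boundary points of $\A$. One inclusion is immediate: at an entry point the backward points $g\cdot(s\mathbf 1,0)$ with $s<0$ are not reachable, so $g\in\partial\A$. For the converse I would use that $X_1,\dots,X_r$ is bracket-generating, since the brackets $[X_i,X_j]$ span the second layer; hence the Lie algebra rank condition holds and, by a Krener-type result on attainable sets, $\A\subseteq\overline{\operatorname{int}\A}$. Together with the transversality of $X_0$ to $\{T=\const\}$ and the forward-invariance above, this yields that any reachable point lying strictly inside its coset ray ($s>s_0$) admits a full neighbourhood in $\A$ and is therefore interior. The main obstacle is exactly this converse: one must argue, from the nonempty interior of $\A$ and the openness of the forward $X_0$-flow in the transverse directions, that a non-entry reachable point is interior, while handling the minor technical gap between $\A$ and its closure stemming from the Lipschitz class in~\eqref{eq-time-optimal}. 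Granting this, the entry points coincide with $\partial\A$, and the stated equivalence follows.
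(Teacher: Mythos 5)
Your ray/entry-point strategy is viable and genuinely different from the paper's proof, but it is incomplete exactly where you flag ``the main obstacle,'' and the tools you name cannot close that hole. Krener's theorem gives $\A\subseteq\overline{\mathrm{int}\,\A}$, i.e.\ interior points arbitrarily close to the earlier ray point $g'=g\cdot(-s_0\mathbf{1},0)$; transporting them by the flow of $X_0$ (a diffeomorphism) produces interior points arbitrarily close to $g$ --- which only re-proves $g\in\overline{\mathrm{int}\,\A}$, not $g\in\mathrm{int}\,\A$. The missing ingredient is concrete: you need one interior point on the diagonal, i.e.\ $(s\mathbf{1},0)\in\mathrm{int}\,\A$ for some (hence, by uniform dilations, every) $s>0$. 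Granting that, the semigroup property $\A\cdot\A\subseteq\A$ (which you already use for forward-invariance) finishes the converse in one line: a non-entry point satisfies $g=g'\cdot(s_0\mathbf{1},0)\in g'\cdot\mathrm{int}\,\A\subseteq\mathrm{int}\,\A$, since left translation by $g'\in\A$ is a diffeomorphism. This lemma is not free: the natural control reaching $(s\mathbf{1},0)$ is the constant control $u\equiv\tfrac13(1,1,1)$, which is singular --- it is the vertex-$V$ singular trajectory of Proposition~\ref{prop-types-of-extremals}, and the covector $h\equiv 0$, $h_{12}=h_{23}=h_{31}\neq 0$ annihilates the differential of the free-time endpoint map along it --- so no implicit-function/``openness in transverse directions'' argument can work on this trajectory, contrary to what your sketch assumes. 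One workable completion: take $p\in\mathrm{int}\,\A$ (Krener), multiply its six $S_3$-permuted copies (permutations are automorphisms preserving $\A$) to get an interior point $P$ whose $x$-component lies on the diagonal, then multiply by $\iota(P)$, where $\iota(x,y)=(x,-y)$ is the word-reversing anti-automorphism preserving $\A$; this cancels the $y$-component and yields $(c\mathbf{1},0)\in\mathrm{int}\,\A$, after which dilations give all $s>0$. Nothing of this sort is in your proposal, so the crucial direction remains unproved. (Your worry about $\A$ versus its closure, by contrast, is easily dismissed: $\A\cap\{x_1+\dots+x_r\leqslant T\}$ is compact by Filippov's theorem, so $\A$ is closed and each ray does contain its entry point.)

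Two further remarks. First, your cosets point the wrong way for the word ``factorized'': the system is left-invariant, so it descends to the quotient of $G$ by \emph{left} translations of $H=\{(s\mathbf{1},0)\}$ (fibers $Hg$); on the quotient by right translations (your fibers $gH$) the induced velocity set depends on the position within the fiber, so ``reach the coset $g^1H$ in least time'' is a variable-endpoint problem rather than the factorized problem. Your argument survives verbatim after replacing $g\cdot(s\mathbf{1},0)$ by $(s\mathbf{1},0)\cdot g$ throughout, using the same semigroup property. Second, the paper's own proof takes a different, much shorter route: it observes that problem~\eqref{eq-time-optimal} is the time-extension of the reduced problem (time being the coordinate along $\R(1,\dots,1)^T$, by Proposition~\ref{prop-optimaltillinfty}) and cites the standard correspondence between time-optimality and boundaries of attainable sets of extended systems; your argument, once the missing interior-point lemma is supplied, would amount to a self-contained proof of that correspondence in this specific setting.
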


\begin{proof}
The original problem is an extended problem of the reduced one, see for example~\cite{agrachev-sachkov}.
So, optimal trajectories of the reduced problem lift to the trajectories coming to the boundary of the attainable set of the original one.
\end{proof}

For $i=1,\dots, r$ let $X_i$ be a basis vector field corresponding to coordinate $x_i$.
Denote by $h_i = \langle X_i, \, \cdot \, \rangle$ linear on the fibers of the cotangent bundle $T^*G$ functions.
Introduce a family of functions on $T^*G$ depending on a parameter $u = (u_1,\dots,u_r)^T$:
\begin{equation}
\label{eq-hamiltonian}
H_u(\lambda) = u_1h_1(\lambda) + \dots + u_rh_r(\lambda), \qquad \lambda \in T^*G.
\end{equation}

Necessary conditions of optimality follow from the Pontryagin maximum principle~\cite{pontryagin,agrachev-sachkov}.
These conditions are the same for extremal trajectories of problem~\eqref{eq-time-optimal} coming to the boundary of the set $\A$
and for optimal trajectories of the reduced control system.

\begin{theorem}
\label{th-pmp}
If $(\tilde{x},\tilde{y})$ and $\tilde{u}$ are an optimal process for problem~$\eqref{eq-time-optimal}$, then
there exist a curve $\lambda \in \Lip{([0,T], T^*G)}$, $\pi(\lambda(t)) = (\tilde{x}(t), \tilde{y}(t))$
and a number $\nu \in \{0, 1\}$ such that for a.e. $t \in [0, T]$ we have
\begin{equation*}
\begin{array}{ll}
\dot{\lambda}(t) = \vec{H}_{\tilde{u}(t)}(\lambda(t)), & \lambda(t) \neq 0,\\
H_{\tilde{u}(t)}(\lambda(t)) = \max\limits_{u \in U}{H_u(\lambda(t))}, & H_{\tilde{u}(t)}(\lambda(t)) \equiv \nu,\\
\end{array}
\end{equation*}
where $\pi : T^*G \rightarrow G$ is the natural projection and $\vec{H}$ is the Hamiltonian vector field corresponding to a Hamiltonian $H$.
\end{theorem}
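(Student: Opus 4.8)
The plan is to obtain this statement as a direct specialization of the general Pontryagin maximum principle for time-optimal problems on a smooth manifold, with the manifold being the group $G$ and the control-affine dynamics read off from \eqref{eq-time-optimal}. First I would recast the system in the intrinsic form $\dot{q} = \sum_{i=1}^r u_i X_i(q)$, where $q = (x,y)$ and $X_1,\dots,X_r$ are the left-invariant basis vector fields; this is exactly the coordinate form written in \eqref{eq-controlsystem}, and it confirms that the control-dependent Hamiltonian $\langle \lambda, \sum_i u_i X_i \rangle$ coincides with $H_u(\lambda)$ in \eqref{eq-hamiltonian} once we set $h_i(\lambda) = \langle \lambda, X_i \rangle$. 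Since the right-hand sides $X_i$ are polynomial, hence smooth, and the controls lie in the compact convex simplex $U$, the hypotheses of the maximum principle as stated in~\cite{pontryagin,agrachev-sachkov} are met.

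Next I would invoke the principle itself. For a time-optimal problem it produces a nontrivial pair $(\nu_0, \lambda(\cdot))$ with $\nu_0 \geqslant 0$, a Lipschitz lift $\lambda(t) \in T^*_{\tilde{q}(t)} G$ of the optimal trajectory satisfying the adjoint Hamiltonian equation $\dot{\lambda} = \vec{H}_{\tilde{u}(t)}(\lambda)$ for a.e.\ $t$, together with the pointwise maximality condition $H_{\tilde{u}(t)}(\lambda(t)) = \max_{u \in U} H_u(\lambda(t))$. The Lipschitz regularity of $\lambda$ follows because the adjoint equation has a right-hand side that is smooth in $\lambda$ and measurable-bounded in $t$ through $\tilde{u}$. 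Because the problem is autonomous and the terminal time is free, the maximized Hamiltonian is a first integral of the flow and equals the cost multiplier, so $\max_{u \in U} H_u(\lambda(t)) \equiv \nu_0$ along the trajectory.

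It then remains to normalize the multiplier and to exclude $\lambda(t) = 0$. The functions $H_u$ are linear and homogeneous in the fiber variable $\lambda$, and both the adjoint equation and the maximum condition are invariant under the rescaling $\lambda \mapsto c\lambda$ with $c > 0$; hence if $\nu_0 > 0$ we may divide $\lambda$ by $\nu_0$ to arrange $\nu_0 = 1$, while $\nu_0 = 0$ gives the abnormal case, so that $\nu \in \{0,1\}$. In the normal case $\nu = 1$ the relation $\max_u H_u(\lambda(t)) = 1$ forbids $\lambda(t) = 0$, since $\lambda = 0$ would force every $h_i(\lambda) = 0$ and thus $H_u \equiv 0$; in the abnormal case $\nu = 0$ the nontriviality clause of the principle already guarantees $\lambda(t) \neq 0$. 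Moreover, as the adjoint system is linear in $\lambda$, vanishing at one instant would propagate to all $t$, so nonvanishing holds on the whole interval.

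Since the assertion is essentially a transcription of a standard theorem, I do not expect a genuine obstacle. The step requiring the most care is the bookkeeping in the last paragraph: verifying that the normalization of $\nu$ to $\{0,1\}$ is compatible with the nonvanishing requirement in both the normal and abnormal regimes, and confirming that the constancy of the maximized Hamiltonian together with the free-terminal-time condition pins its value exactly to $\nu$.
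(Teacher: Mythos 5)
Your proposal is correct and matches the paper's treatment: the paper gives no separate proof of Theorem~\ref{th-pmp}, stating it as a direct specialization of the Pontryagin maximum principle for time-optimal problems cited from~\cite{pontryagin,agrachev-sachkov}, which is exactly what you carry out. Your extra bookkeeping (normalizing $\nu$ by homogeneity of $H_u$ in $\lambda$, and propagating nonvanishing of $\lambda$ via linearity of the adjoint system) is sound and simply makes explicit what the paper leaves to the cited references.
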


Recall that the curve $\lambda$ is called \emph{an extremal} and
its projection $\pi(\lambda)$ is called \emph{an extremal trajectory}.

Let $Y_{ij} = [X_i, X_j]$ and $h_{ij} = \langle Y_{ij}, \, \cdot \, \rangle$.
Then the adjoint subsystem of the Hamiltonian system of the Pontryagin maximum principle reads as
\begin{equation}
\label{eq-vertpart}
\dot{h}(t) = R \tilde{u}(t), \qquad \dot{R} = 0,
\end{equation}
where $h = (h_1,\dots,h_r)^T$ and $R = (h_{ij})$ is a skew-symmetric matrix.

It is easy to see that if $\nu = 0$, then $u = 0$ and the corresponding extremal trajectory is constant $(0,0)$.
So, we assume that $\nu = 1$.
It follows from the maximum condition that the adjoint trajectories $h(\,\cdot\,)$ for extremals lie on the boundary of the quadrant
\begin{equation*}
Q = \{(h_1,\dots,h_r)^T \in (\R^r)^* \, | \, h_1,\dots,h_r \leqslant 1\}
\end{equation*}
that is a dual set to the set of controls $U$.

\section{\label{sec-extremaltrajectories}Types of extremals}

Here and below we fix $r=3$.
Introduce some notation.
For $i=1,2,3$ let $F_i$ be the face of the quadrant $Q$ defined by the condition $h_i = 1$.
Next, for $i,j=1,2,3$ denote by $E_{ij} = F_i \cap F_j$ the edges of $Q$.
Finally, let $E = E_{12} \cup E_{23} \cup E_{31}$ be the union of the edges and $V = (1,1,1)^T$ be the vertex.

Let $h:[0,T] \rightarrow (\R^3)^*$ be a trajectory of the adjoint subsystem.
Note that if $h(t)$ is inside some face of the quadrant $Q$, then the corresponding control is at the vertex of the triangle $U$.
If $h(t)$ is on some edge of $Q$, then the corresponding control lies on the edge of the triangle $U$.
If $h(t) = V$, then the control may be anywhere at the triangle $U$.

\begin{proposition}
\label{prop-types-of-extremals}
For problem~$\eqref{eq-time-optimal}$ there are three types of extremals $\lambda : [0, T] \rightarrow T^*G$:\\
$(1)$ bang-bang, i.e., such that
\begin{equation*}
\card{\{t \in [0, T] \, | \, h(t) \in E\}} < \infty;
\end{equation*}
$(2)$ singular, i.e., $h(t) \in E_i$ for some $i=1,2,3$ or $h(t) \in V$;\\
$(3)$ mixed, i.e., a concatenation of a finite number of singular and bang-bang arcs,
where $h:[0,T] \rightarrow (\R^3)^*$ is the corresponding trajectory of the adjoint subsystem.
\end{proposition}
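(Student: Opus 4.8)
The plan is to exploit the very simple structure of the adjoint subsystem~\eqref{eq-vertpart}. Since $R$ is a constant skew-symmetric $3\times 3$ matrix, I would first rewrite it through its associated vector $\omega \in \R^3$, so that $R\tilde u = \omega \times \tilde u$ for every $\tilde u$. Then $\dot h = \omega \times \tilde u$ is orthogonal to $\omega$ at every instant, whence $\frac{d}{dt}\langle \omega, h\rangle = \langle \omega, \omega \times \tilde u\rangle = 0$. Because $R$ is constant along the whole extremal, $\omega$ is the same on every arc, so $\langle \omega, h\rangle$ is a global first integral and the adjoint trajectory is confined, for all $t\in[0,T]$, to the intersection $\Gamma = \Pi \cap \partial Q$ of a fixed plane $\Pi = \{h : \langle\omega,h\rangle = \const\}$ with the boundary of the quadrant. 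As $\partial Q$ consists of the three faces $F_1,F_2,F_3$, the curve $\Gamma$ is a polygonal line built from at most three segments, one in each face, meeting along the edges $E_{12},E_{23},E_{31}$ (and possibly closing up into a triangle around $V$ when $\Pi$ cuts off the corner).

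Next I would read off the dynamics face by face. While $h(t)$ lies in the interior of a face $F_i$ the maximum condition of Theorem~\ref{th-pmp} forces the control to the vertex $e_i$ of $U$, so the velocity $\dot h = \omega \times e_i$ is constant; moreover $\omega \times e_i$ is exactly the direction of the line $\Pi \cap \{h_i=1\}$, hence the trajectory slides along the corresponding segment of $\Gamma$ at constant speed and cannot reverse. Thus inside a face the motion is rigidly determined (a bang arc), and any change of behaviour can occur only when $h$ reaches an edge $E_{ij}$ or the vertex $V$. At an edge I would distinguish two situations by the component $\omega_k$ with $\{i,j,k\}=\{1,2,3\}$: if $\omega_k \neq 0$ the edge $E_{ij}$ is transverse to $\Pi$, so $\Gamma$ crosses it in a single point and the trajectory can only pass through instantaneously, producing a bang-bang switch; if $\omega_k = 0$ the edge $E_{ij}$ is parallel to $\Pi$, the conditions $\dot h_i = \dot h_j = 0$ become solvable with $\tilde u$ on the edge $[e_i,e_j]$ of $U$, and $h$ may travel along $E_{ij}$ for a positive time, which is precisely a singular arc. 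The analogous computation at $V$ shows that resting there requires $\omega \times \tilde u = 0$ for some $\tilde u \in U$, i.e. $\tilde u \parallel \omega$, which is the vertex-singular case.

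With this local dictionary the trichotomy follows by partitioning $[0,T]$ according to the position of $h(t)$ on $\Gamma$. If $h$ never dwells on an edge or at $V$, all edge encounters are transversal crossings and we are in case $(1)$; if $h$ stays on one fixed edge $E_{ij}$ (or at $V$) for the entire interval we are in case $(2)$; otherwise the trajectory alternates between bang arcs in face interiors and singular arcs on in-plane edges or at the vertex, which is case $(3)$. The main obstacle, and the point I would spend the most care on, is finiteness: I must rule out chattering, i.e. infinitely many switches or infinitely many singular/bang alternations accumulating in $[0,T]$. Here the first integral does the decisive work, since $\Pi$ is fixed and therefore the face segments of $\Gamma$ have fixed positive lengths while the speed $|\omega \times e_i| \le |\omega|$ is uniformly bounded; consequently each bang arc takes time bounded below by a positive constant, and the singular-to-bang transitions can only happen at the finitely many geometric junctions of $\Gamma$ (the endpoints of its in-plane edges and the vertex). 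With $T<\infty$ this forces only finitely many arcs and, in the pure bang-bang case, only finitely many edge visits, closing the argument; the degenerate case $\omega = 0$, where $h$ is constant, is handled directly and falls under $(1)$ or $(2)$.
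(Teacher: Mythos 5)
Your proof is correct and takes essentially the same route as the paper: your first integral $\langle \omega, h\rangle$ is exactly (up to sign) the Casimir function $C = h_1h_{23} + h_2h_{31} + h_3h_{12}$ on which the paper's proof rests, since for $R\tilde{u} = \omega \times \tilde{u}$ one has $\omega = -(h_{23}, h_{31}, h_{12})$, and both arguments then confine the adjoint trajectory to the intersection of $\partial Q$ with a fixed plane. The only difference is that the paper delegates the resulting case analysis (face dynamics, transversal versus in-plane edges, and finiteness of arcs) to its figures, whereas you carry it out explicitly — a useful elaboration, but not a different method.
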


\begin{proof}
The function
\begin{equation}
\label{eq-Casimir}
C = h_1h_{23} + h_2h_{31} + h_3h_{12}
\end{equation}
is a Casimir function.
In particular, it is an integral of system~\eqref{eq-vertpart}.
Hence, the trajectories $h(t)$ lie in the intersections of the boundary of the quadrant $Q$ with
two-dimensional affine subspaces $C = \const$.
Possible situations are presented in Fig.~\ref{fig-bang-bang} for bang-bang trajectories and the singular trajectory at the vertex $V$ and Fig.~\ref{fig-singular} where non-trivial singular and mixed arcs appear.
\end{proof}

\begin{figure*}[h]
\centering
\subfloat[$1$ or $2$ switchings]{\includegraphics[width=2.5in]{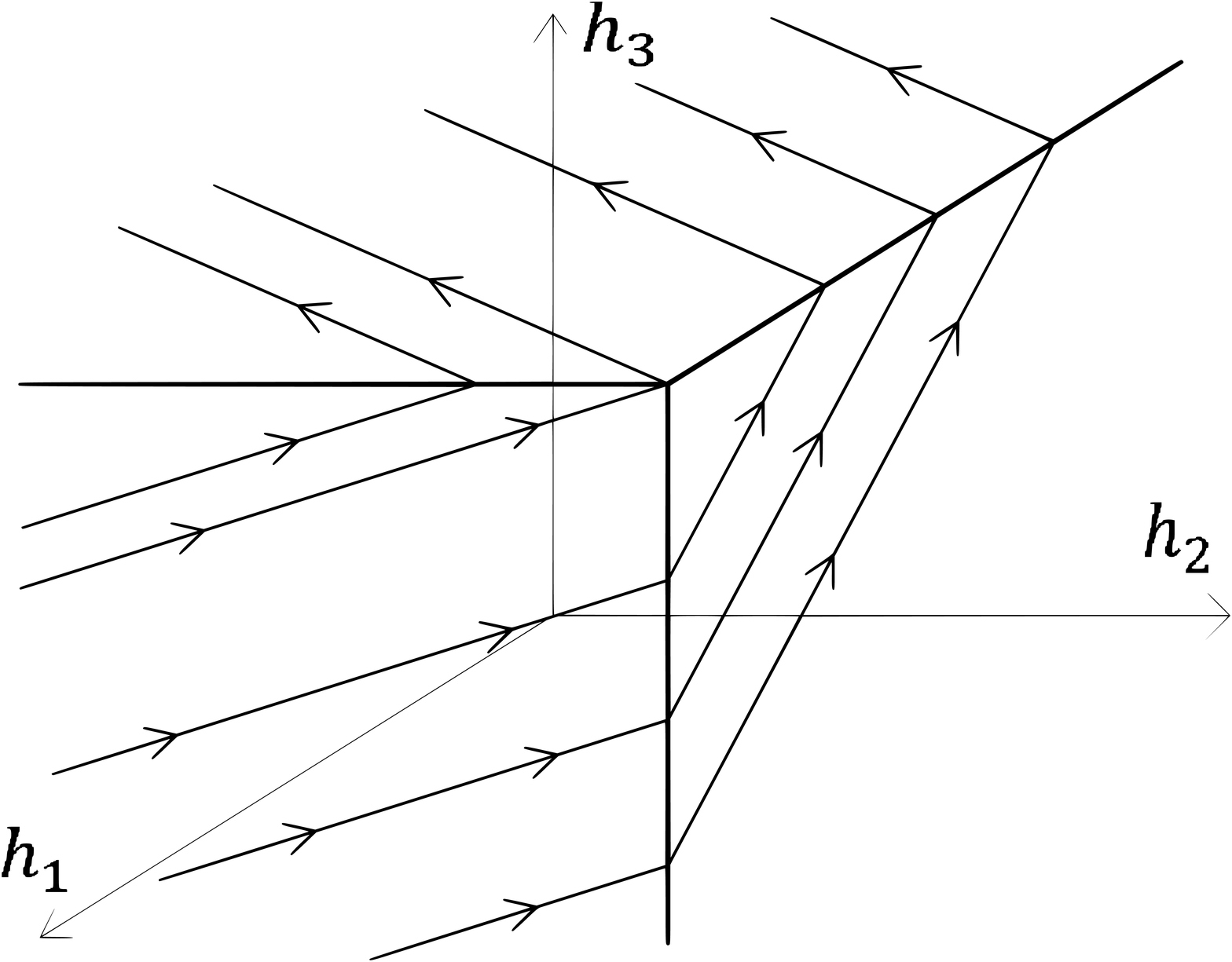}
\label{fig-bang-bang-1-2}}
\hfil
\subfloat[periodical control]{\includegraphics[width=2.5in]{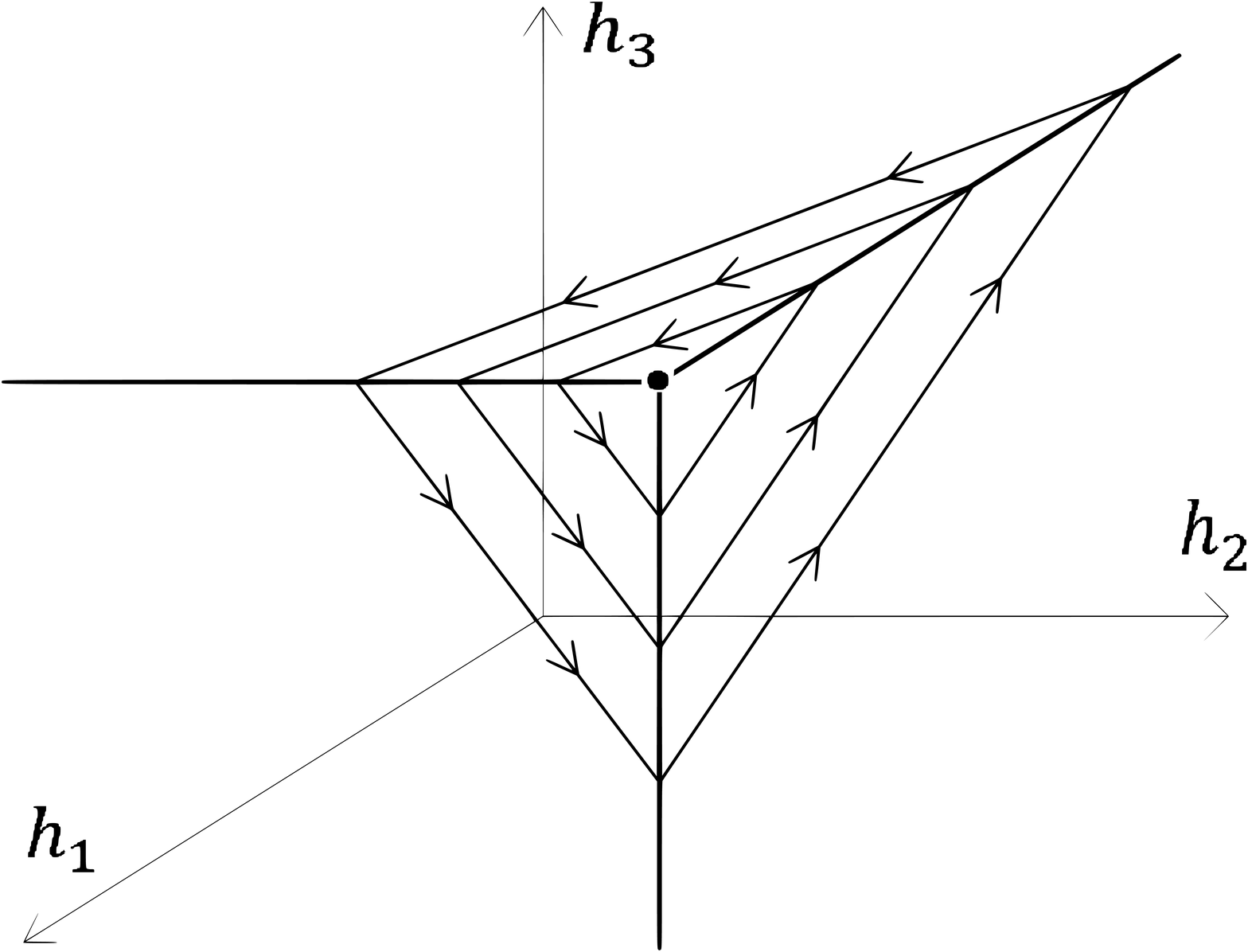}
\label{fig-bang-bang-3}}
\caption{Adjoint subsystems for bang-bang trajectories and one singular trajectory.}
\label{fig-bang-bang}
\end{figure*}

\begin{figure*}[h]
\centering
\subfloat[concatenation of singular and bang arcs]{\includegraphics[width=2.5in]{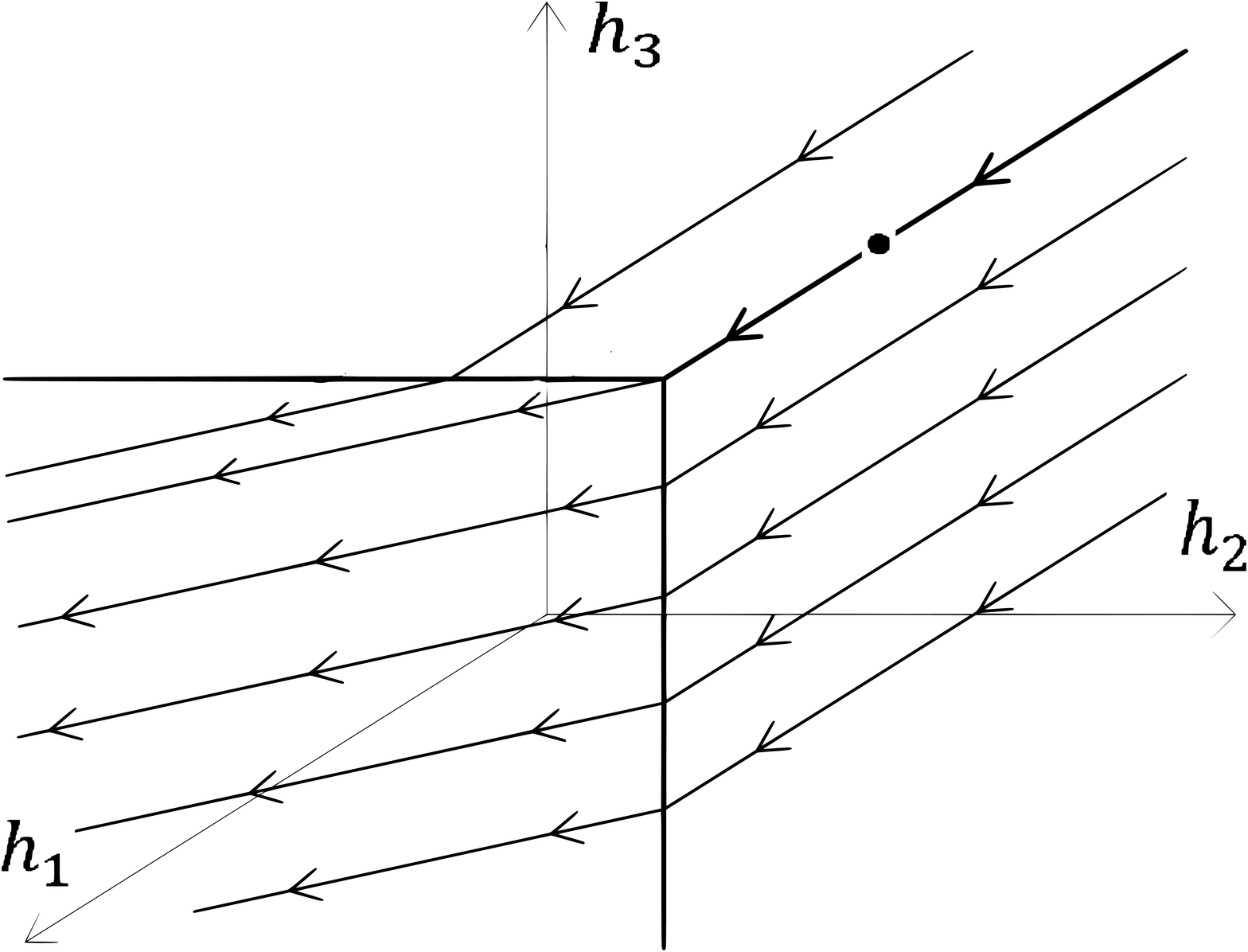}
\label{fig-singular-bang-bang}}
\hfil
\subfloat[concatenation of two singular arcs]{\includegraphics[width=2.5in]{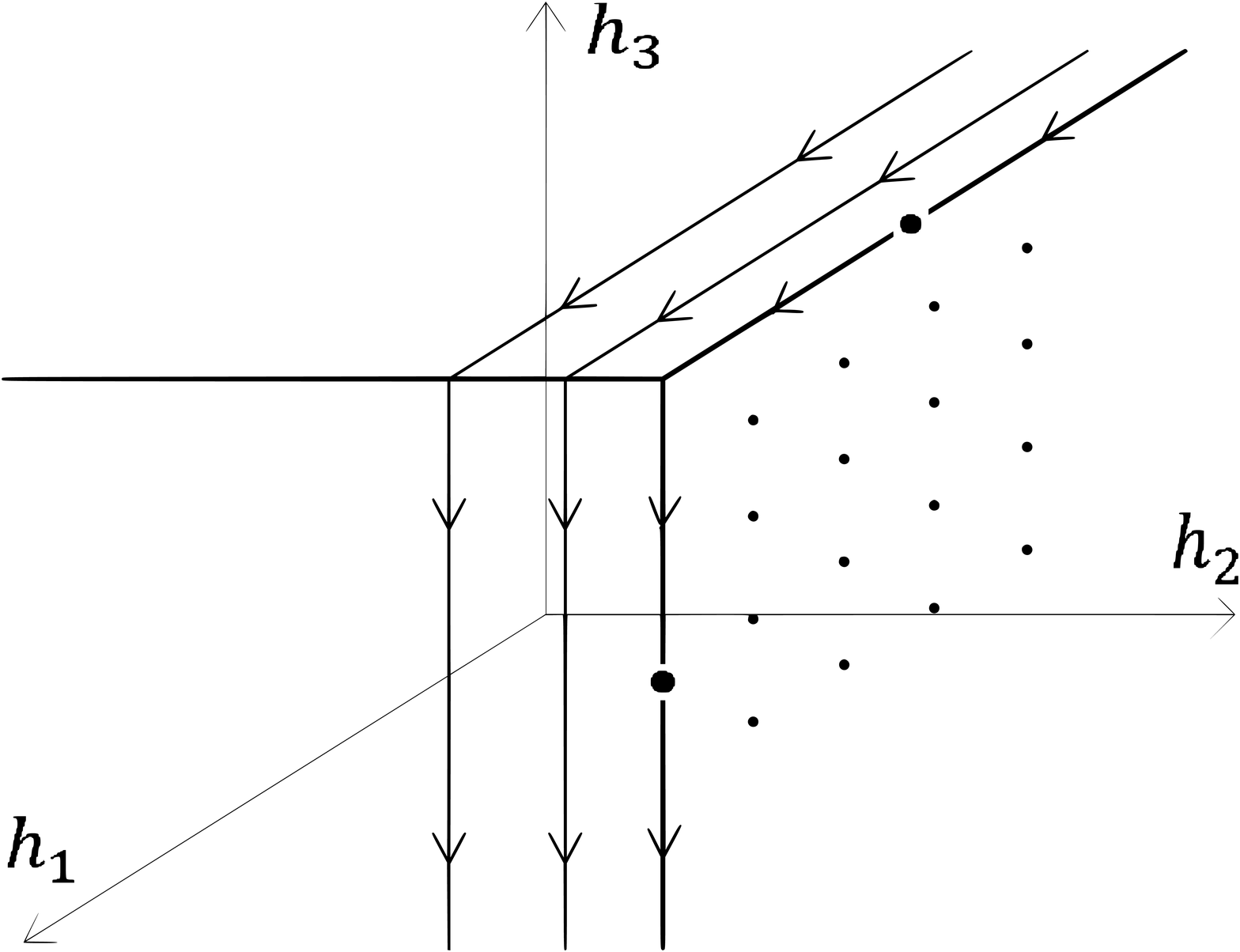}
\label{fig-singular-singular}}
\caption{Adjoint subsystems with singular trajectories.}
\label{fig-singular}
\end{figure*}

\section{\label{sec-bang-bang}Bang-bang trajectories}

In this section we get an upper bound for the number of switchings on optimal bang-bang curves using the following
theorem by A.\,A.~Agrachev and R.\,V.~Gamkrelidze~\cite{agrachev-gamkrelidze}.
Earlier this method was successfully applied for analysis of bang-bang extremals on the Cartan group~\cite{ardentov-ledonne-sachkov2}.

\begin{theorem}
\label{th-ag}
Assume that $q(\,\cdot\,)$ is an extremal curve, $u(\,\cdot\,)$ is an extremal control of problem~$\eqref{eq-time-optimal}$
and $\lambda(\,\cdot\,)$ is the corresponding extremal that is unique upto a multiplication by a positive scalar.
Let $u(t) = u^i$ for $t \in (t_i, t_{i+1})$, where $0 = t_0 < t_1 < \dots < t_{k+1} = T$, and
let $V_i$ be the velocity corresponding to the control $u^i$.
Define recursively the following operators:
\begin{equation*}
P_0 = P_1 = \id, \qquad P_i = P_{i-1} \circ e^{(t_i - t_{i-1})\ad{V_{i-1}}}, \qquad i=2,\dots,k.
\end{equation*}
Define the vector fields $Z_i = P_i V_i$.
If the quadratic form
\begin{equation*}
G(\alpha) = \sum\limits_{0 \leqslant i < j \leqslant k}{\alpha_i\alpha_j \langle \lambda(t_1), [Z_i, Z_j](q(t_1)) \rangle}
\end{equation*}
is not negative semidefinite on the space
$$
W=\{(\alpha_0, \dots, \alpha_k) \in \R^{k+1} \bigm|
\sum\limits_{i=0}^{k}{\alpha_i} = 0, \, \sum\limits_{i=0}^{k}{\alpha_iZ_i(q(t_1))} = 0 \},
$$
then the trajectory $q(\,\cdot\,)$ is not optimal.
\end{theorem}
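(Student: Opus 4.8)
The plan is to realize the statement as a \emph{second variation} (Legendre-type) necessary condition for the endpoint map of the bang-bang family, and to prove its contrapositive: if $G$ fails to be negative semidefinite on $W$, I produce an admissible competitor showing that $q(\cdot)$ does not reach the boundary of the attainable set, hence by Proposition~\ref{prop-equivalent} is not optimal. First I would fix the combinatorial type of the control (the sequence $V_0,\dots,V_k$) and let the \emph{durations} $\tau_i = t_{i+1}-t_i$ vary, setting $\tau_i \mapsto \tau_i + \alpha_i$. The perturbed endpoint is
\begin{equation*}
E(\alpha) = e^{(\tau_k+\alpha_k)V_k}\circ\cdots\circ e^{(\tau_0+\alpha_0)V_0}(q(0)),
\end{equation*}
and the admissibility constraints (durations stay nonnegative, controls stay in $U$) hold for small $\alpha$ because the $V_i$ sit at vertices of $U$. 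The condition $\sum_i\alpha_i=0$ keeps the total time $T$ fixed, i.e. keeps the competitor on the same level $\sum_i x_i = T$, which is the relevant level for the reduced problem.

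Next I would compute the differential of $E$ at $\alpha=0$. Differentiating the flow composition and transporting every velocity back to the reference point $q(t_1)$ by the adjoint flows $e^{\tau_{i-1}\ad V_{i-1}}$ produces exactly the recursively defined operators $P_i$ and the pulled-back fields $Z_i = P_i V_i$, so that, up to the fixed diffeomorphism carrying $q(t_1)$ to $q(T)$, one has $d_0E(\alpha) = \sum_i \alpha_i Z_i(q(t_1))$. The space $W$ is then precisely $\{\sum_i\alpha_i=0\}$ intersected with $\ker d_0E$; off $W$ the map is already a submersion, so the only way $q(T)$ can be a boundary point is that all admissible first-order motion is confined to $W$, where it vanishes and the decision passes to second order.

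The heart of the argument is the second-order term. For $\alpha\in W$ both the first variation $\sum_i\alpha_iZ_i$ and --- since by the maximum condition $\langle\lambda(t_1),Z_i(q(t_1))\rangle\equiv\nu=1$ and $\sum_i\alpha_i=0$ --- its pairing with $\lambda(t_1)$ vanish, so I expand $E$ to order two by the chronological (Baker--Campbell--Hausdorff) calculus. Pairing the quadratic part with the supporting covector $\lambda(t_1)$ supplied by Theorem~\ref{th-pmp} collapses the symmetric contributions and leaves exactly
\begin{equation*}
\langle \lambda(t_1),\, d^2_0E(\alpha)\rangle = \sum_{0\leqslant i<j\leqslant k}\alpha_i\alpha_j\,\langle\lambda(t_1),[Z_i,Z_j](q(t_1))\rangle = G(\alpha).
\end{equation*}
Because $\lambda$ maximizes the Hamiltonian, the attainable set lies near $q(T)$ on one side of the hyperplane $\langle\lambda(t_1),\cdot\rangle=\const$; hence for a boundary point $G$ must be negative semidefinite on $W$. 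If instead $G(\alpha)>0$ for some $\alpha\in W$, the corresponding second-order variation pushes the endpoint strictly onto the forbidden side while staying admissible, exhibiting an interior direction and proving $q(\cdot)$ not optimal.

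I expect the main obstacle to be the clean second-order expansion: one must verify that, after restriction to $W$ and pairing with $\lambda(t_1)$, no residual first-order or diagonal ($i=j$) terms survive and that the mixed terms assemble into the bracket form $G$, with the correct sign coming from the maximization in the Pontryagin principle. Keeping the perturbed controls admissible (positivity of $\tau_i+\alpha_i$, the role of the constraint $\sum_i\alpha_i=0$, and the ordering of the $P_i$) and upgrading ``$G$ indefinite on $W$'' into genuine local openness of the endpoint map, rather than a merely formal inequality, is the remaining technical point.
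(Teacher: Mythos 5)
Before comparing: the paper itself gives no proof of Theorem~\ref{th-ag} --- it is imported verbatim from Agrachev--Gamkrelidze~\cite{agrachev-gamkrelidze} --- so your attempt is being measured against what such a proof actually requires, not against a proof in this text. Your finite-dimensional reduction is the correct skeleton and matches how the result is meant to be used here: varying the switching durations with $\sum_i \alpha_i = 0$, identifying $d_0E(\alpha)=\sum_i\alpha_i Z_i(q(t_1))$ so that $W$ is exactly the time-preserving part of $\ker d_0E$, noting that $\langle\lambda(t_1),Z_i(q(t_1))\rangle=\langle\lambda(t_i),V_i(q(t_i))\rangle=\nu=1$ kills the first-order pairing on $\{\sum_i\alpha_i=0\}$, and (in the step-$2$ nilpotent case, where $[Z_i,Z_j]=[V_i,V_j]$ since $Z_i-V_i$ is central) extracting $G$ from the quadratic BCH term --- all of this is sound and is precisely what Proposition~\ref{prop-bang-bang-more4} later exploits.

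The genuine gap is in the final implication, which is the entire content of the theorem. Your argument runs: boundary point $\Rightarrow$ the attainable set lies locally on one side of the hyperplane $\langle\lambda(t_1),\cdot\,\rangle=\const$ $\Rightarrow$ $G\leqslant 0$ on $W$; equivalently, a variation reaching the ``forbidden side'' certifies non-optimality. But the Pontryagin maximum principle is a first-order condition and supplies no such local separation, and in this very problem the separation claim is provably false: the boundary faces described in Proposition~\ref{prop-bang-bang-4} and Theorem~\ref{th-main} are saddle surfaces $p+qr=1$, and at an interior point of such a face the set $\B$ (hence $\A$, since any supporting functional would either restrict to a supporting hyperplane of $\B$ in the section or depend on $x$ alone, which is impossible) admits \emph{no} supporting hyperplane --- attainable points exist on both sides of every hyperplane through the point, even though the point is on $\partial\A$ and is reached by an optimal $4$-switching trajectory. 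Consequently, producing competitors with $\langle\lambda,x\rangle>\langle\lambda,q(T)\rangle$ proves nothing: non-optimality means that a full neighbourhood of $q(T)$ in the reduced space is attainable. What is actually needed is a corank-one, one-sided second-order open mapping theorem: a positive value $G(\alpha)>0$ on $W$ gives the $+\lambda$ direction, surjectivity of $d_0E$ restricted to $\{\sum_i\alpha_i=0\}$ onto $\ker\lambda$ gives the transversal directions, and the $-\lambda$ side must be covered separately by variations with $\sum_i\alpha_i<0$ (shortened time), which is where the normalization $\nu=1\neq 0$ of Theorem~\ref{th-pmp} enters and why the hypothesis can be ``not negative semidefinite'' rather than ``indefinite.'' This openness argument is the heart of the Agrachev--Gamkrelidze proof, and your sketch replaces it with a separation statement that fails.
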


\begin{proposition}
\label{prop-bang-bang-more4}
If an extremal curve has more than 4 switchings, then it is not optimal.
\end{proposition}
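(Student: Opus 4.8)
The plan is to apply Theorem~\ref{th-ag}: assuming a bang-bang extremal with $k\geqslant 5$ switchings, I would exhibit a vector $\alpha$ in the space $W$ on which the quadratic form $G$ is positive, so that by the theorem the trajectory is not optimal. The first step is to exploit that $G$ is step $2$: every $\ad V_i$ maps the first layer into the second and annihilates the second, so $(\ad V_i)^2=0$ and $e^{s\ad V_i}=\id+s\ad V_i$. Writing $\tau_l=t_{l+1}-t_l$ for the arc durations and $V_i=X_{c_i}$ for the control of the $i$-th arc, this gives the closed forms $P_i=\id+\sum_{l<i}\tau_l\ad V_l$ and $Z_i=V_i+\sum_{l<i}\tau_l[V_l,V_i]$. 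Since the correction terms lie in the second layer and brackets of second-layer elements vanish, one obtains $[Z_i,Z_j]=[V_i,V_j]$, a constant element whose pairing with $\lambda$ is the constant $h_{c_ic_j}$ (a component of $R$). Hence $G(\alpha)=\sum_{i<j}\alpha_i\alpha_j\,h_{c_ic_j}$ has \emph{constant} coefficients.

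Next I would pin down the control pattern. On each face $F_a$ the adjoint dynamics is $\dot h=Re_a$, a constant velocity, so $h(\,\cdot\,)$ traverses a straight segment that enters $F_a$ through one of its two bounding edges and leaves through the other; consequently the faces are visited in a fixed cyclic order and the control indices $c_0,c_1,\dots,c_k$ run as $1,2,3,1,2,3,\dots$ (up to relabelling and orientation), as is already visible in Fig.~\ref{fig-bang-bang}. Thus for $k\geqslant 5$ the sequence $(c_i)$ is completely determined, each of the three controls recurring every three arcs.

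It remains to analyse $W$ and the sign of $G$. The first-layer part of the constraint $\sum_i\alpha_iZ_i=0$ reads $\sum_{i:\,c_i=a}\alpha_i=0$ for $a=1,2,3$, and the second-layer part gives three further linear equations whose coefficients involve the durations $\tau_l$. For $k\geqslant 6$ there are at least $7$ unknowns and at most $6$ such equations, so $W\neq\{0\}$ automatically; for $k=5$ one checks that these six equations are dependent, so again $\dim W\geqslant 1$. On the subspace cut out by the first-layer equations alone the cyclic pattern makes the cross terms collapse, and a direct computation reduces $G$, up to a positive factor, to $h_{12}\,ab+h_{23}\,bc-h_{31}\,ac$ in suitable coordinates $a,b,c$. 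This is a symmetric form with zero diagonal, hence indefinite whenever $R\neq 0$; evaluating it on a vector of $W$ where it is positive shows that $G$ is not negative semidefinite, and Theorem~\ref{th-ag} yields non-optimality.

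The main obstacle is the interaction between the second-layer constraints and this indefinite form: I must guarantee that after imposing the three duration-dependent equations there still remains a vector of $W$ on which $G$ is strictly positive. This is delicate precisely in the borderline case $k=5$, where the naive dimension count gives only $\dim W\geqslant 0$ and one must establish the rank drop of the constraint system, presumably using the arc durations together with the Casimir relation~\eqref{eq-Casimir}. Once a nonzero element of $W$ with $G>0$ is produced, the argument for larger $k$ follows a fortiori.
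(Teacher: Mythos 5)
Your overall strategy coincides with the paper's: apply Theorem~\ref{th-ag}, use the step-$2$ structure to get $Z_i=V_i+\sum_{l<i}\tau_l[V_l,V_i]$ with central corrections, observe that $\langle\lambda(t_1),[Z_i,Z_j]\rangle=\pm h_{c_ic_j}$ are constants, and use the cyclic control pattern $1,2,3,1,2,3,\dots$ forced by the triangular section of $\partial Q$. However, the two points you explicitly defer are not technicalities; they are the entire substance of the proof, and your sketch of how to close them would not work. First, for $k=5$ the six constraints cutting out $W$ (three first-layer equations, which already imply $\sum_i\alpha_i=0$, plus three second-layer equations) are \emph{not} dependent for arbitrary durations: the dependence holds only because the adjoint trajectory runs around a triangle (the section $C=\const$ of $\partial Q$) with constant velocity on each face, so the two arcs lying on the same edge have equal duration, $\tau_5=\tau_2$ in the paper's notation. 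Without this periodicity relation, the $Y_{23}$-component of $\sum_i\alpha_iZ_i=0$ reduces to $(\tau_5-\tau_2)\alpha_5=0$, forcing $W=\{0\}$, and Theorem~\ref{th-ag} yields nothing. The rank drop you hope to extract ``presumably using the arc durations together with the Casimir relation'' is exactly this geometric fact, which you never identify.

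Second, and more seriously, your sign argument is a non sequitur. Once $\tau_5=\tau_2$ is imposed, $W$ is one-dimensional, spanned by $\bigl(-\tfrac{\tau_4}{\tau_3},-\tfrac{\tau_2}{\tau_3},-1,\tfrac{\tau_4}{\tau_3},\tfrac{\tau_2}{\tau_3},1\bigr)$; there is no freedom to ``evaluate it on a vector of $W$ where it is positive,'' because the sign of $G$ on $W$ is forced. Indefiniteness of the ambient zero-diagonal form on the larger subspace cut out by the first-layer equations alone says nothing about its restriction to this particular line. A direct computation gives $G(\alpha)=\tfrac{2\tau_2\tau_4}{\tau_3}\bigl(\tfrac{h_{12}}{\tau_3}+\tfrac{h_{23}}{\tau_4}-\tfrac{h_{31}}{\tau_2}\bigr)\alpha_5^2$, whose sign is genuinely unclear because of the minus sign. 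The paper resolves it quantitatively: using the Casimir~\eqref{eq-Casimir} one finds the edge-traversal times $\tau_2=K/(h_{12}h_{23})$, $\tau_3=K/(h_{23}h_{31})$, $\tau_4=K/(h_{31}h_{12})$ with $K=h_{12}+h_{23}+h_{31}-C>0$, so the bracket has the sign of $h_{12}h_{23}h_{31}+h_{23}h_{31}h_{12}-h_{31}h_{12}h_{23}=2h_{12}h_{23}h_{31}>0$, and $G$ is positive definite on $W$. This explicit link between the switching times and the components of $R$ is what makes the theorem applicable, and it is missing from your proposal; as written, your argument establishes non-optimality for no extremal at all.
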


\begin{proof}
Let us apply Theorem~\ref{th-ag} to an extremal curve with $k = 5$ switchings.
For definiteness let
\begin{equation*}
\begin{array}{lll}
V_0 = X_1, & V_1 = X_2, & V_2 = X_3, \\
V_3 = X_1, & V_4 = X_2, & V_5 = X_3 \\
\end{array}
\end{equation*}
and $\tau_i = t_i-t_{i-1}$ for $i=1,\dots,5$.
We have
\begin{equation*}
\begin{array}{c}
Z_0 = X_1, \qquad Z_1 = X_2, \qquad Z_2 = X_3 + \tau_2 Y_{23},\\
Z_3 = X_1 - \tau_2 Y_{12} - \tau_3 Y_{13}, \qquad Z_4 = X_2 - \tau_3 Y_{23} + \tau_4 Y_{12},\\
Z_5 = X_3 + (\tau_2 + \tau_5) Y_{23} + \tau_4 Y_{13}. \\
\end{array}
\end{equation*}
Note that $\tau_5 = \tau_2$, since this is the time of passing the same edge of a triangle by the extremal trajectory,
see Fig.~\ref{fig-bang-bang-3}.
Hence, from the system of equations for the subspace $W$ we get
\begin{equation*}
\begin{array}{lll}
\alpha_0 = -\frac{\tau_4}{\tau_3}\alpha_5, & \alpha_1 = -\frac{\tau_2}{\tau_3}\alpha_5, & \alpha_2 = -\alpha_5,\\
\alpha_3 = \frac{\tau_4}{\tau_3}\alpha_5, & \alpha_4 = \frac{\tau_2}{\tau_3}\alpha_5, & \alpha_5 \in \R.\\
\end{array}
\end{equation*}
Next, for quadratic form $G$ we obtain
\begin{equation*}
G(\alpha) = \frac{2\tau_2\tau_4}{\tau_3}\left(\frac{h_{12}}{\tau_3} + \frac{h_{23}}{\tau_4} - \frac{h_{31}}{\tau_2}\right)\alpha_5^2.
\end{equation*}
Note that since we have a triangle as a section of $Q$ and then the coefficients of $C$ are positive $h_{12}, h_{23}, h_{31} > 0$.
Using~\eqref{eq-Casimir} it is not difficult to see that
\begin{equation*}
\tau_2 = \frac{K}{h_{12}h_{23}}, \qquad \tau_3 = \frac{K}{h_{23}h_{31}}, \qquad \tau_4 = \frac{K}{h_{31}h_{12}},
\end{equation*}
where $K = h_{12}+h_{23}+h_{31} - C > 0$.
Therefore, the sign of $G(\alpha)$ coincide with the sign of
$h_{12}h_{23}h_{31} + h_{23}h_{31}h_{12} - h_{31}h_{12}h_{23} = 2 h_{12}h_{23}h_{31} > 0$.
Thus, the quadratic form $G$ is positive definite on the subspace $W$.
So, the extremal trajectory under consideration can not be optimal.
\end{proof}

Now we describe a part of the boundary $\partial \A_1$ (or equivalently $\partial \B$) corresponding to the endpoints of bang-bang extremal curves with the number of switchings less than $4$.

\begin{proposition}
\label{prop-ban-bang-less4}
Bang-bang trajectories with not more than $3$ switchings correspond to $\partial \B$.
\end{proposition}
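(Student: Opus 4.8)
The plan is to obtain these endpoints by direct substitution into \eqref{eq-b-coord} and to identify the stratum of $\partial\B$ on which each one lands. First I would note that to reach the section $\A_1$ (where $x=(1,\dots,1)$) the total time spent in each direction must equal $1$; in particular every admissible bang-bang word $e^{t_1X_{i_1}}\cdots e^{t_nX_{i_n}}$ must contain all three letters $1,2,3$. Consequently a word with $0$ or $1$ switching cannot reach $\A_1$, and a word with at most $3$ switchings has either $3$ arcs (a permutation of $(1,2,3)$) or $4$ arcs (exactly one letter repeated, necessarily in non-adjacent positions). Exploiting the $S_3$-symmetry of the problem --- cyclic relabelling acts by permuting $(p,q,r)$, while a transposition reverses time and sends $p_{ij}\mapsto p_{ji}=1-p_{ij}$ --- it is enough to treat one representative of each combinatorial type.

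Next I would evaluate \eqref{eq-b-coord} on each representative. For a $3$-arc permutation all three pairs are ordered almost surely, so every coordinate is $0$ or $1$; the six permutations yield the six points $(1,1,0),(1,0,0),(0,1,0),(0,1,1),(1,0,1),(0,0,1)$, whereas $(0,0,0)$ and $(1,1,1)$ do not occur since they would encode a cyclic order. For a $4$-arc word the time constraints leave a single parameter $s\in[0,1]$; a short computation gives, for the two types, the segment $(s,1,0)$ for the word $(1,2,1,3)$ and the segment $(s,1,1-s)$ for the word $(1,2,3,1)$. In each case at least one coordinate $p_{ij}$ stays pinned at $0$ or $1$ along the whole family.

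Membership in $\partial\B$ is then immediate from the probability interpretation $\B\subseteq[0,1]^3$: any point of $\B$ having a coordinate equal to $0$ or $1$ lies on $\partial[0,1]^3$, hence on $\partial\B$, because each of its neighbourhoods meets the complement of the cube and therefore of $\B$. Thus the $2$-switching endpoints are the vertices of $\partial\B$ and the $3$-switching endpoints sweep the edges joining them --- both the genuine cube edges (type $(1,2,1,3)$) and the truncating diagonals $p+r=1$ lying in a face $p_{ij}=1$ (type $(1,2,3,1)$). Applying the full $S_3$-orbit to the two representatives recovers all of these vertices and edges.

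I expect the main difficulty to be bookkeeping rather than a single sharp estimate. The two points that need care are (i) checking that the symmetry reduction is genuinely exhaustive, so that no combinatorial type of admissible word with at most $3$ switchings is missed, and (ii) confirming that along each $4$-arc family exactly one coordinate stays saturated, which is precisely what forces the segment onto $\partial\B$ rather than into its interior. I would also flag that these low-switching controls produce only the $0$- and $1$-dimensional strata of $\partial\B$; the two-dimensional faces are swept by the periodic $4$-switching controls permitted by Proposition~\ref{prop-bang-bang-more4} and belong to the complementary analysis.
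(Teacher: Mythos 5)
Your proof is correct and follows essentially the same route as the paper: evaluate \eqref{eq-b-coord} on each admissible word, observe that every endpoint has a coordinate pinned at $0$ or $1$, and conclude membership in $\partial\B$ from $\B\subset[0,1]^3$. Two bookkeeping differences deserve comment. First, your remark that words with $0$ or $1$ switching cannot reach $\A_1$ is right, and is in fact cleaner than the paper's own accounting, which assigns ``trajectories without switchings or with one switching'' to the vertices $A_1$, $B_2$, $C_1$ --- taken literally this is impossible, since all three controls must act for unit time; all six vertices come from $2$-switching words, the two triples being distinguished by the parity (cyclic order) of the permutation. Second, your $4$-arc family of type $(1,2,1,3)$ is admissible but not extremal: by \eqref{eq-vertpart} the matrix $R$ is constant along an extremal, and the transition $F_1\to F_2$ requires $h_{12}<0$ while $F_2\to F_1$ requires $h_{12}>0$, so no extremal face sequence contains a pattern $i,j,i$; extremal bang-bang words are subwords of the two cyclic orders, and their $3$-switching representatives sweep only the triangle edges (the face diagonals such as $(s,1,1-s)$), not the cube edges $(s,1,0)$ --- in the paper the latter arise instead from mixed singular-bang trajectories, Proposition~\ref{prop-mixed-singular-bang-bang}. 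This over-inclusion is harmless for the statement at hand, since the extra endpoints also lie on the cube boundary, so your argument proves the containment for a strictly larger class of trajectories than the extremal ones; but one should not read off from it that bang-bang extremals reach the cube edges.
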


\begin{proof}
Obviously $\B \subset [0,1]^3$.
Using~\eqref{eq-b-coord} we conclude, that
trajectories without switchings or with one switching correspond to the vertices $A_1 = (1,0,0)$, $B_2 = (0,1,0)$, $C_1 = (0,0,1)$ of the cube $[0,1]^3$, see Fig.~\ref{fig-cube}.
Next, trajectories with two switchings match to the vertices $A_2 = (1,0,1)$, $C_2 = (0,1,1)$, $D_1 = (1,1,0)$.
Finally, trajectories with three switchings respond to the edges of the triangles $A_1B_2C_1$ and $A_2C_2D_1$.
These edges lie on the boundary of the cube.
This implies that these edges are in $\partial \B$.
\end{proof}

\begin{figure}[h]
\centerline{\includegraphics[width=0.35\linewidth]{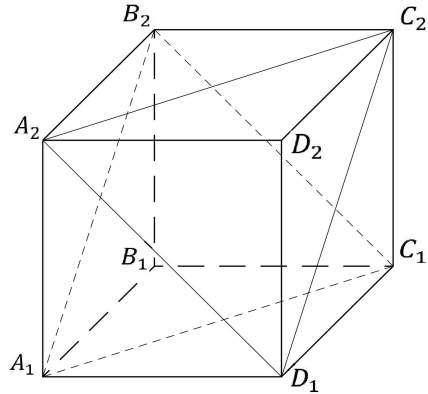}}
\caption{The cube in the $(p,q,r)$-space containing the set $\B$.}
\label{fig-cube}
\end{figure}

\begin{proposition}
\label{prop-bang-bang-4}
Extremal trajectories with $4$ switchings correspond to quadratic surfaces in $\B$ of the type
$$
p + qr = 1 \qquad \text{or} \qquad (1-p) + (1-q)(1-r) = 1
$$
for any cyclic permutations of the variables
$$
p = P(\xi_1 < \xi_2), \qquad q = P(\xi_2 < \xi_3), \qquad r = P(\xi_3 < \xi_1).
$$
\end{proposition}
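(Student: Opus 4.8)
The plan is to classify the control words of $4$-switching extremals and then read off the endpoint coordinates from \eqref{eq-b-coord}. By Proposition~\ref{prop-types-of-extremals} a bang-bang extremal corresponds to an adjoint trajectory $h(\,\cdot\,)$ that stays on the faces $F_1,F_2,F_3$ of $Q$ and jumps when it meets an edge of $E$; inside each face the control is a fixed velocity field $X_i$ and $\dot h$ is a constant vector (a column of $R$), so on each face $h$ moves along a straight segment. Consequently $h$ runs monotonically around the triangular section of $\partial Q$ cut out by $\{C=\const\}$ (see Fig.~\ref{fig-bang-bang-3}), crossing the three edges $E_{12},E_{23},E_{31}$ in a cyclic order whose orientation is fixed by the common sign of $h_{12},h_{23},h_{31}$. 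With $4$ switchings there are $5$ arcs, so up to a cyclic relabeling of $\{1,2,3\}$ the control word is one of the two
\[
X_1\,X_2\,X_3\,X_1\,X_2 \qquad\text{or}\qquad X_1\,X_3\,X_2\,X_1\,X_3,
\]
corresponding to the two orientations of traversal. In either word exactly one direction occurs once and the other two occur twice.

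Next I would impose membership in $\A_1$. Write $\tau_1,\dots,\tau_5$ for the successive arc durations. For the first word the direction $3$ appears once and the directions $1,2$ twice, so the condition $x=(1,1,1)$ from the first formula of \eqref{eq-b-coord} reads $\tau_3=1$, $\tau_1+\tau_4=1$ and $\tau_2+\tau_5=1$. Evaluating the second formula of \eqref{eq-b-coord} over the pairs of factors gives $q=p_{23}=\tau_2\tau_3=\tau_2$, $r=p_{31}=\tau_3\tau_4=\tau_4$ and $p=p_{12}=\tau_1\tau_2+\tau_1\tau_5+\tau_4\tau_5=\tau_1(\tau_2+\tau_5)+\tau_4\tau_5=\tau_1+\tau_4\tau_5$.

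Then comes the elimination. Substituting $\tau_1=1-\tau_4=1-r$ and $\tau_5=1-\tau_2=1-q$ into the last expression yields $p=(1-r)+r(1-q)=1-qr$, i.e. $p+qr=1$. The same bookkeeping for the reversed word $X_1X_3X_2X_1X_3$ gives $p=\tau_1$, $q=\tau_5$ and $r=\tau_2\tau_4=(1-p)(1-q)$, which is the relation $(1-p)+(1-q)(1-r)=1$ up to a cyclic permutation of $p,q,r$; running through the three choices of starting face produces all cyclic permutations within each family. This is exactly the asserted list of quadrics.

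The main obstacle I expect is the combinatorial step, not the algebra: one must be sure that a $4$-switching extremal really is forced to cycle once around the triangle, crossing each edge without ever reversing across it, so that precisely the two words above (up to symmetry) can occur. This is where Proposition~\ref{prop-types-of-extremals} and the constancy of the flow direction on each face are essential. A minor additional point is to check that the durations forced by $x=(1,1,1)$ are nonnegative; this merely confines each control word to a sub-region of its quadric and does not affect the surface itself.
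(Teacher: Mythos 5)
Your proposal is correct and follows essentially the same route as the paper: the paper's (much terser) proof likewise reads the endpoint coordinates off formula~\eqref{eq-b-coord} for the cyclic control words, with even permutations of the controls yielding the family $p+qr=1$ and odd permutations the family $(1-p)+(1-q)(1-r)=1$. Your explicit justification of the cyclic word structure (straight-line motion of $h$ on each face of $Q$, hence monotone traversal of the triangular section) and the elimination of the durations $\tau_i$ are exactly the details the paper leaves implicit.
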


\begin{proof}
This follows directly from~\eqref{eq-b-coord}.
The sequence of controls defines the type of the equation.
Even permutations of controls correspond to the first type of equations and odd permutations correspond to the second one.
\end{proof}

\section{\label{sec-singular}Singular trajectories}

The next proposition allows us to consider singular trajectories with a control on the boundary $\partial U$.

\begin{proposition}
\label{prop-singular-boundary}
If a control for an extremal trajectory lies in the interior of the control set $U$, then this trajectory lies in the interior of the attainable set $\A$.
\end{proposition}

\begin{proof}
Assume by contradiction that this trajectory comes to the boundary $\partial \A$.
It follows that this trajectory is optimal for the reduced problem.
Project the corresponding control from the center of the set $U$ to its boundary and
use this projection as a new control.
Thus, this new control allows to go faster along the same extremal trajectory of the reduced problem.
\end{proof}

Therefore, the study of singular trajectories corresponding to the vertex $V$ is reduced to the study of bang-bang trajectories,
singular trajectories corresponding to an edge of the quadrant $Q$ and mixed trajectories.

Let a singular trajectory corresponds to the edge $E_{ij}$.
Consider a free two-step Carnot subgroup $H_{ij}$ such that its Lie algebra is generated by the elements $X_i$ and $X_j$.
(The subgroup $H_{ij}$ is isomorphic to \emph{the Heisenberg group}.)
Denote the attainable set for the subgroup $H_{ij}$ by $\A(H_{ij})$.

\begin{proposition}
\label{prop-singular-edge}
It is sufficient to consider singular curves corresponding to the edge $E_{ij}$ with two switchings
to attain the set $\partial \A(H_{ij})$.
Three switchings are sufficient to attain any point of the set $\A(H_{ij})$.
\end{proposition}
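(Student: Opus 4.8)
The plan is to reduce the problem on the edge $E_{ij}$ to a reachability question on the Heisenberg subgroup $H_{ij}$, and then to read off $\A(H_{ij})$ from an elementary integral identity. First I would observe that on $E_{ij}$ the maximum condition forces the third control to vanish, $u_k \equiv 0$; since the trajectory starts at the identity, $x_k \equiv 0$, whence $\dot y_{ik} = x_i u_k - x_k u_i \equiv 0$ and likewise $\dot y_{jk} \equiv 0$, so the whole curve stays in $H_{ij}$ and obeys $\dot x_i = u_i$, $\dot x_j = u_j$, $\dot y_{ij} = x_i u_j - x_j u_i$ with $u_i + u_j = 1$, $u_i, u_j \geqslant 0$. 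Thus it suffices to analyse reachability from the origin for this Heisenberg control system.

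Next I would pin down $\A(H_{ij})$ explicitly. Writing $\frac{d}{dt}(x_i x_j) = x_j u_i + x_i u_j$ and combining with $\dot y_{ij} = x_i u_j - x_j u_i$ yields
\[
x_i x_j - y_{ij} = 2\int_0^t x_j u_i\,ds, \qquad x_i x_j + y_{ij} = 2\int_0^t x_i u_j\,ds,
\]
whose right-hand sides are nonnegative because $x_i,x_j \geqslant 0$ and $u_i,u_j \geqslant 0$. Hence
\[
\A(H_{ij}) = \{(x_i,x_j,y_{ij}) \mid x_i,x_j \geqslant 0,\ |y_{ij}| \leqslant x_i x_j\},
\]
a region bounded by the two quadric caps $y_{ij} = \pm x_i x_j$. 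Equality $y_{ij} = x_i x_j$ forces $x_j u_i \equiv 0$, i.e.\ the control exhausts $X_i$ before switching to $X_j$; so the upper cap is swept by the one-switch bang-bang curve $X_i X_j$ and, symmetrically, the lower cap by $X_j X_i$. Together with the degenerate faces $x_i x_j = 0$ this shows that every point of $\partial \A(H_{ij})$ is reached by a bang-bang control along the edge with at most two switchings (in fact the one-switch curves already suffice).

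Finally, for an interior point with $|y_{ij}| < x_i x_j$ I would exhibit an explicit control: the symmetric template $X_j X_i X_j$ with durations $s,t,u$ reaches the point $(t,\,s+u,\,t(u-s))$, and since $u-s$ ranges over $[-(s+u),s+u]$ the coordinate $y_{ij}$ sweeps the whole interval $[-x_i x_j, x_i x_j]$; thus a fortiori three switchings realise every point of $\A(H_{ij})$. The delicate point is not the construction but the certification that the caps and the coordinate faces exhaust $\partial \A(H_{ij})$, together with a uniform bound on the switching count. The efficient device is the Heisenberg switching function $h_i - h_j$, which by~\eqref{eq-vertpart} satisfies $\frac{d}{dt}(h_i - h_j) = h_{ij} = \const$, is therefore affine in $t$, and changes sign at most once; this simultaneously pins the bang-bang boundary arcs to the two caps and caps the number of switchings, in direct parallel with Proposition~\ref{prop-bang-bang-more4}.
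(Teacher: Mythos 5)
Your proof is correct, and at its core it is the ``direct check'' that the paper's one-line proof alludes to, with the details the paper omits actually supplied. The paper factors $\A(H_{ij})$ by dilations, asserts without proof that the section is the interval $\A_1(H_{ij})=[-1,1]$, and declares the switching counts ``checked directly''; you instead characterize the unnormalized set via the integral identities $x_ix_j \mp y_{ij} = 2\int_0^t x_ju_i\,ds$ (resp.\ $2\int_0^t x_iu_j\,ds$), which is equivalent to the paper's assertion by dilation homogeneity but actually proves it: the identities give the outer bound $|y_{ij}|\leqslant x_ix_j$, and your explicit templates $X_iX_j$ and $X_jX_iX_j$ (endpoint $(t,\,s+u,\,t(u-s))$, computed correctly) give the inner bound. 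Note also that your counts --- one switching for the caps, two for interior points --- are sharper than the ``two and three'' in the statement; this is harmless since the proposition only claims sufficiency, and in fact your sharper counts are exactly what the paper itself uses later in the proof of Proposition~\ref{prop-mixed-singular-singular} (``one switching on the first singular arc and two switchings on the second one''). Your closing device, that along the edge the switching function $h_i-h_j$ has constant derivative $h_{ij}$ by~\eqref{eq-vertpart} and $u_i+u_j=1$, and hence is affine in $t$, is correct but redundant: the integral identities together with the explicit constructions already certify that the two caps exhaust $\partial\A(H_{ij})$ and bound the number of switchings, so the Pontryagin-type argument is a complementary remark rather than a needed step.
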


\begin{proof}
Since the factor of the set $\A(H_{ij})$ by dilations is just an interval $\A_1(H_{ij}) = [-1, 1]$,
then this statement could be checked directly.
\end{proof}

\section{\label{sec-mixed}Mixed trajectories}

\begin{proposition}
\label{prop-mixed-singular-bang-bang}
The endpoints of mixed trajectories that are concatenation of a singular arc corresponding to an edge of the quadrant $Q$ and a bang arc
\emph{(}see Fig.~\emph{\ref{fig-singular-bang-bang})} match to the edges of the set $\B$.
\end{proposition}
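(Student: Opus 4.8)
The plan is to exploit the $S_3$-symmetry of problem~\eqref{eq-time-optimal} (permutation of the coordinates) together with the dilation invariance to fix a canonical configuration, and then to read the endpoint directly off~\eqref{eq-b-coord}. So I fix the singular arc to lie on the edge $E_{12}$ and first pin down the control structure of the whole mixed trajectory. As in the proof of Proposition~\ref{prop-types-of-extremals}, the adjoint trajectory $h(\,\cdot\,)$ stays in a level set $C=\const$ of the Casimir function~\eqref{eq-Casimir} intersected with $\partial Q$. On $E_{12}$ we have $h_1=h_2=1$, so $C=h_{23}+h_{31}+h_3h_{12}$; requiring this to be constant while $h(\,\cdot\,)$ runs along the edge for a positive time forces $h_{12}=0$. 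With this constraint the set $\{C=h_{23}+h_{31}\}\cap\partial Q$ consists of the edge $E_{12}$ together with a single line on the face $F_3$ emanating from the vertex $V$. Hence the singular arc (whose control mixes $X_1$ and $X_2$, by Proposition~\ref{prop-singular-boundary} lying on $\partial U$) can only be concatenated, through $V$, with a bang arc running along $F_3$, whose control is necessarily $X_3$.

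Having fixed this structure, the control word of the trajectory is an arc mixing $X_1$ and $X_2$ followed by (or preceded by) a single $X_3$-arc. On the section $\A_1$ the total duration of each of the three controls equals $1$, and since the singular arc carries $u_3=0$ and the bang arc is pure $X_3$, the whole $X_3$-block lies entirely after (respectively entirely before) every $X_1$- and $X_2$-subarc. Substituting this separation into~\eqref{eq-b-coord} is the decisive step: in one cyclic order no admissible pair $(l,m)$ contributes while in the other every admissible pair does, so two of the three coordinates collapse to extremal cube values. Concretely, if the singular arc comes first one obtains $q=p_{23}=1$ and $r=p_{31}=0$, with $p=p_{12}$ left free and governed solely by the internal mixing of the singular arc on the Heisenberg subgroup $H_{12}$; reversing the order gives $q=0$, $r=1$.

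Finally I would let the singular part range over its admissible controls. By Proposition~\ref{prop-singular-edge} the singular curve sweeps all of $\A_1(H_{12})=[-1,1]$, so the free coordinate $p$ fills a whole interval and the endpoints trace the segment $\{(p,1,0)\}$ (respectively $\{(p,0,1)\}$). Since $\B\subset[0,1]^3$ and the value $q=1$ (or $r=1$) is extremal, this segment lies on a face of the cube and therefore in $\partial\B$; being one-dimensional and cut out by fixing two coordinates at cube-extremal values, it is an edge of $\B$. Applying the cyclic permutations of $(p,q,r)$ and the transposition generating the $S_3$-symmetry then produces all the remaining edges asserted by the proposition.

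The main obstacle I expect is the first step, namely rigorously establishing the geometry of the mixed trajectory: that a singular arc on $E_{12}$ forces $h_{12}=0$, that it can join a bang arc only through the vertex $V$, and that the accompanying bang control is $X_3$ rather than $X_1$ or $X_2$. Once this control word is secured, the evaluation of the endpoint through~\eqref{eq-b-coord} and its identification with an edge of $\B$ are routine, exactly in the spirit of Proposition~\ref{prop-bang-bang-4}.
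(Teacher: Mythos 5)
Your proof is correct, and its decisive step --- fixing the control word (an arc mixing $X_1,X_2$ entirely preceded or entirely followed by a pure $X_3$ block) and reading the endpoint off~\eqref{eq-b-coord} --- is exactly the computation the paper compresses into ``can be checked directly''. What you add is the justification of the control word via the adjoint system, which the paper delegates to Fig.~\ref{fig-singular-bang-bang}. Two remarks on that added part. First, your Casimir argument for $h_{12}=0$ has a small hole: the adjoint point may sit still at an interior point of $E_{12}$ (this happens when $h_{31}u_1=h_{23}u_2$), and then constancy of $C$ along the arc imposes nothing; the watertight route is to differentiate $h_1\equiv h_2\equiv 1$ along the arc, which with $u_3=0$ gives $h_{12}u_2=h_{12}u_1=0$, and to use $u_1+u_2=1$. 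Second, your description of the level set as $E_{12}$ plus a single line in $F_3$ through $V$ requires $h_{23},h_{31}\neq 0$; in the degenerate cases a whole face joins the level set, and although this produces no new endpoints for the singular-plus-bang class, it should be dismissed explicitly.

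More interestingly, your conclusion and the paper's proof disagree on two of the six edges, and you are right. Your computation gives the segments $\{(p,1,0)\,|\,p\in[0,1]\}$ and $\{(p,0,1)\,|\,p\in[0,1]\}$ together with their cyclic images, i.e.\ the cube edges $B_2D_1$ and $C_1A_2$ along with $A_1A_2$, $B_2C_2$, $C_2C_1$, $D_1A_1$. The paper instead lists $A_2B_2$ and $C_1D_1$; but $A_2=(1,0,1)$, $B_2=(0,1,0)$ and $C_1=(0,0,1)$, $D_1=(1,1,0)$ differ in all three coordinates, so these segments are space diagonals, not edges of the cube, and no such endpoint sets can arise from~\eqref{eq-b-coord}. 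Your list is also the one consistent with Theorem~\ref{th-main}: each carving surface bounds a quadrilateral face of $\B$ (for instance $p+qr=1$ contains exactly the edges $A_1A_2$, $A_2C_2$, $C_2D_1$, $D_1A_1$), and together with the two triangles of Proposition~\ref{prop-ban-bang-less4} one gets the polyhedron with $6$ vertices, $12$ edges and $8$ faces, whose six straight cube edges are precisely the ones you found. So the discrepancy is a misprint in the paper's proof, which your blind computation corrects.
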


\begin{proof}
It can be checked directly using~\eqref{eq-b-coord} that the ends of these trajectories correspond to the edges
$A_1A_2$, $A_2B_2$, $B_2C_2$, $C_2C_1$, $C_1D_1$, $D_1A_1$ of the cube.
Thus, these endpoints correspond to $\partial \B$.
\end{proof}

\begin{proposition}
\label{prop-mixed-singular-singular}
The endpoints of mixed trajectories that are concatenation of two singular arcs corresponding to two edges of the quadrant $Q$
\emph{(}see Fig.~\emph{\ref{fig-singular-singular})} match to
the triangular faces $A_1A_2B_2$, $B_2C_2C_1$ and $C_1D_1A_1$ of the set $\B$.
\end{proposition}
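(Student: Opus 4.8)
The plan is to argue exactly as in the proof of Proposition~\ref{prop-mixed-singular-bang-bang}, by a direct computation with the coordinate formulas~\eqref{eq-b-coord}, but now feeding in the control word of a trajectory built from two singular arcs rather than a singular and a bang arc. First I would read off the combinatorial type of the admissible control from the adjoint picture in Fig.~\ref{fig-singular-singular}: the curve $h(\,\cdot\,)$ runs along one edge $E_{ij}$ of the quadrant $Q$ and then, after passing through the vertex $V$, along a second edge $E_{ik}$ sharing the index $i$. The corresponding control is therefore a singular (Heisenberg) arc using the generators $X_i,X_j$ followed by a singular arc using $X_i,X_k$. By Proposition~\ref{prop-singular-edge} each arc may be realised by a word in the two relevant one-parameter subgroups with at most three switchings, and since the reduced attainable set $\A_1(H_{ij})$ of a Heisenberg factor is the interval $[-1,1]$, each arc carries exactly one free parameter, namely the value of the coordinate $p_{ij}\in[0,1]$ produced on that arc. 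After imposing the normalisation $x=(1,\dots,1)$, the endpoint of the concatenation thus depends on two real parameters together with the split of the total $X_i$-time between the two arcs.

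Next I would write the concatenation explicitly as $e^{t_1X_{i_1}}\cdots e^{t_nX_{i_n}}$ and substitute into~\eqref{eq-b-coord}. This yields $(p,q,r)$ as an explicit (bilinear) function of the arc parameters, and the computation has to establish two things. First, that the image lies on the affine plane through the three named vertices --- for instance $p+r=1$ (equivalently $p_{12}=p_{13}$) for the triangle $C_1D_1A_1$, the plane $p+q=1$ for $A_1A_2B_2$, and the coordinate plane $p=0$ for $B_2C_2C_1$. Second, that as the two parameters range over their admissible domain the endpoint sweeps the entire closed triangle. The three vertices are recovered as the purely bang-bang degenerations of the word, whose index orderings reproduce the points $A_1,A_2,B_2$, resp.\ $B_2,C_2,C_1$, resp.\ $C_1,D_1,A_1$ by the same vertex computation already used in Propositions~\ref{prop-ban-bang-less4} and~\ref{prop-bang-bang-4}; the three sides of each triangle are precisely the adjacent pair of hexagon edges and the connecting chord supplied by Proposition~\ref{prop-mixed-singular-bang-bang}. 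It suffices to carry out this computation once, as the three triangles are handled by the identical substitution into~\eqref{eq-b-coord}, differing only in the choice of the two edges $E_{ij},E_{ik}$ and hence in which plane appears.

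The step I expect to be the main obstacle is the surjectivity claim, namely that the two-parameter family fills the whole two-dimensional face and not merely a curve or a proper subregion. The difficulty is that the two singular arcs share the generator $X_i$, so the two parameters are coupled through the splitting of the total $X_i$-time; one must check that this coupling still produces a genuinely two-dimensional image with the correct boundary, and that no second-order or maximum-principle constraint (of the kind exploited in Section~\ref{sec-bang-bang}) further restricts the admissible parameters. Verifying that the admissible domain maps onto the triangle with the three prescribed vertices, rather than a clipped version of it, is the crux of the argument.
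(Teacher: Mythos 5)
Your setup is the right one, and in spirit it is the paper's own: realize each singular arc as a word in the two relevant generators via Proposition~\ref{prop-singular-edge} and read off the endpoint from~\eqref{eq-b-coord} (the paper merely packages the parameter reduction as the product identity $\A(H_{12})\cdot\A(H_{23})=\partial\A(H_{12})\cdot\A(H_{23})$). But the verification step you describe would fail for two of the three triangles, and the reason is structural. If the first arc is a word in $X_i,X_j$ and the second a word in $X_i,X_k$, then \emph{every} occurrence of $X_j$ precedes \emph{every} occurrence of $X_k$, so by~\eqref{eq-b-coord} we get $p_{jk}=x_jx_k=1$ on the section; that is, every singular--singular concatenation pins one of the coordinates $p,q,r$ to $0$ or to $1$. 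Hence the endpoint set always lies in a \emph{coordinate} face of the cube and can never lie in a diagonal plane such as your $p+q=1$ or $p+r=1$. Indeed, with the coordinates assigned in Proposition~\ref{prop-ban-bang-less4} ($A_2=(1,0,1)$, $D_1=(1,1,0)$), the triangles $A_1A_2B_2$ and $C_1D_1A_1$ contain the point $(1/2,1/2,1/2)$, which satisfies all the carving inequalities of Theorem~\ref{th-main} strictly and is therefore interior to $\B$; those two triangles are not faces of $\B$ at all. The proposition is true only under the other labeling, $A_2=(1,1,0)$, $D_1=(1,0,1)$ --- which is also forced by Proposition~\ref{prop-mixed-singular-bang-bang}, whose hexagon $A_1A_2B_2C_2C_1D_1$ must consist of cube edges --- and then all three named triangles are the intersections of $\B$ with the coordinate planes $r=0$, $p=0$, $q=0$. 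Carrying out the substitution you propose would have exposed this inconsistency in the paper; instead you committed to target planes that formula~\eqref{eq-b-coord} cannot produce, so the proof as planned cannot be completed.

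On the surjectivity issue you single out as the crux: once the correct planes are in hand it closes by exactly the computation you outline. For the pair $(E_{12},E_{23})$ let the first arc spend time $s$ on $X_2$, so the second spends $1-s$. Then $r=0$, while $p=p^{(1)}_{12}+(1-s)$ with $p^{(1)}_{12}\in[0,s]$ arbitrary by Proposition~\ref{prop-singular-edge}, and $q=s+p^{(2)}_{23}$ with $p^{(2)}_{23}\in[0,1-s]$ arbitrary and independent of $p^{(1)}_{12}$. So for fixed $s$ the image is the full square $[1-s,1]\times[s,1]$ in the plane $r=0$, and the union over $s\in[0,1]$ is exactly $\{(p,q,0)\,|\,p+q\geqslant 1\}$, the whole closed triangle; the coupling through the shared generator is just the single parameter $s$ and is harmless. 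One further point: your enumeration (like the paper's) uses only the three cyclic orders of edge pairs. The three reversed orders, e.g. $E_{23}$ followed by $E_{12}$, pin the distinguished coordinate to the opposite value and sweep the three opposite triangles, on the faces $p=1$, $q=1$, $r=1$ of the cube, which are faces of $\B$ by the same argument; so a complete statement of the proposition involves six triangles, not three.
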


\begin{proof}
Consider two edges of the quadrant $Q$, say $E_{12}$ and $E_{23}$.
The attainable set for such mixed trajectories is $\A(H_{12}) \cdot \A(H_{23}) = \partial \A(H_{12}) \cdot \A(H_{23})$.
The last equality follows from Proposition~\ref{prop-singular-edge}.
So, it is sufficient to make one switching on the first singular arc and two switchings on the second one.
The ends of these trajectories correspond to the triangles $A_1A_2B_2$, $B_2C_2C_1$ and $C_1D_1A_1$.
Since these triangles lie on the boundary of the cube, then they are parts of the boundary $\partial \B$.
\end{proof}

\section{\label{sec-conclusion}Conclusion}

We considered all types of extremal trajectories for the time-optimal problem~\eqref{eq-time-optimal} and now we can summarize the result.

\begin{theorem}
\label{th-main}
$(1)$ The attainable set $\A$ for a free two-step Carnot group of rank $3$
can be obtained applying all dilations~$\eqref{eq-dilation}$ to the section $\A_1 = 2\B - (1,\dots,1)^T$ and
taking the closure.\\
$(2)$ The set $\B$ is a curved polyhedron carved from the cube $[0,1]^3$ with $(p,q,r)$-coordinates by the surfaces
\begin{equation*}
p + qr = 1, \qquad (1-p) + (1-q)(1-r) = 1
\end{equation*}
and surfaces obtained from these surfaces by any permutations of the variables $p, q, r$.\\
$(3)$ The vertices, edges and faces of $\B$ correspond to the endpoints of the extremal trajectories of problem~$\eqref{eq-time-optimal}$ with not more than $2$, $3$ and $4$ switchings, respectively.
\end{theorem}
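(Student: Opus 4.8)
The plan is to treat this theorem as a synthesis of the propositions already established, rather than a fresh argument. Part~(1) is essentially a restatement of facts from the introduction: I would recall that $\A$ is the closure of the orbit of its section $\A_1$ under all dilations~\eqref{eq-dilation}, together with the identity $\A_1 = 2\B - (1,\dots,1)^T$ coming from $\B = \frac{1}{2}((1,\dots,1)^T + \A_1)$. Nothing beyond these two observations is needed here, so I would dispatch this part in a sentence.

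For part~(2) the strategy is to show that $\partial \B$ is swept exactly by the endpoints of extremal trajectories reaching $\partial \A$, and then to collect the resulting surface pieces. By Proposition~\ref{prop-equivalent} every such endpoint is the image of a trajectory optimal for the reduced problem, and by Proposition~\ref{prop-types-of-extremals} these extremals are bang-bang, singular, or mixed. Proposition~\ref{prop-bang-bang-more4} discards bang-bang curves with more than $4$ switchings, and Propositions~\ref{prop-singular-boundary} and~\ref{prop-singular-edge} reduce the singular case to the Heisenberg-subgroup edge trajectories and to the vertex $V$. The two-dimensional strata of $\partial \B$ then arise from exactly two sources: the quadratic surfaces $p + qr = 1$ and $(1-p)+(1-q)(1-r)=1$ (and their permutations) produced by $4$-switch bang-bang curves in Proposition~\ref{prop-bang-bang-4}, and the planar triangular faces produced by the two-singular mixed curves in Proposition~\ref{prop-mixed-singular-singular}. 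I would then argue that these faces, glued along the edges supplied by Propositions~\ref{prop-ban-bang-less4} and~\ref{prop-mixed-singular-bang-bang}, form a closed surface separating the cube, with $\B$ on the inside.

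The hard part is precisely this gluing step: verifying that the quadric patches and the triangular faces fit together along their common edges into a single topological sphere bounding exactly $\B$, with no gaps and no spurious extra pieces. This requires checking which surfaces are incident along each edge $A_1A_2, A_2B_2, \dots, D_1A_1$, confirming that the curved quadric faces lie inside the cube and meet its planar faces only along the listed edges, and ruling out that any point of the cube outside the carved region is attainable. I would settle this by a direct incidence computation from~\eqref{eq-b-coord}: evaluate $p = P(\xi_1<\xi_2)$, $q = P(\xi_2<\xi_3)$, $r = P(\xi_3<\xi_1)$ along each family of extremals, identify the ambient surface, and match its pairwise intersections against the vertex and edge lists of the earlier propositions.

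Finally, part~(3) is pure bookkeeping once part~(2) is in place. I would read off the switching counts recorded in the propositions: Proposition~\ref{prop-ban-bang-less4} assigns $0$, $1$, $2$ switchings to the vertices and $3$ switchings to the edges; Proposition~\ref{prop-mixed-singular-bang-bang} also realizes edges by mixed curves; and Propositions~\ref{prop-bang-bang-4} and~\ref{prop-mixed-singular-singular} realize the two-dimensional faces with at most $4$ switchings. Collecting these gives the claimed correspondence between the dimension of a stratum of $\B$ and the maximal number of switchings (not more than $2$, $3$, $4$ respectively), which completes the proof.
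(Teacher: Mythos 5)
Your proposal matches the paper's own treatment: the paper offers no separate argument for Theorem~\ref{th-main} beyond the remark that it ``summarizes the result'' of Propositions~\ref{prop-equivalent}--\ref{prop-mixed-singular-singular}, which is precisely the synthesis you carry out. If anything, you are more candid than the paper in flagging the gluing/incidence verification along the edges $A_1A_2,\dots,D_1A_1$ as a step requiring explicit computation via~\eqref{eq-b-coord} --- the paper leaves that entirely implicit in its propositions.
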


\end{document}